\newtheorem{theorem}{Theorem}
\newtheorem{corollary}[theorem]{Corollary}
\newtheorem{lemma}[theorem]{Lemma}
\newtheorem{proposition}[theorem]{Proposition}
\newtheorem{claim}{Claim}
\newtheorem*{conjecture}{Conjecture}
\theoremstyle{definition}
\newtheorem{case}{Case}
\DeclareMathOperator{\jump}{jump}
\begin{document}
\title{Jumping Sequences}
\author{Steve Butler\thanks{Department of Mathematics, University of California, Los Angeles,
Los Angeles, CA 90095 ({\tt butler@math.ucla.edu}).} \and Ron Graham\thanks{Department of Computer Science and Engineering, University of California, San Diego,
La Jolla, CA 92093 ({\tt graham@ucsd.edu}, {\tt nzang@cs.ucsd.edu}).} \and Nan Zang\footnotemark[2]}
\date{\empty}
\maketitle

\begin{abstract}
An integer sequence $a(n)$ is called a jump sequence if $a(1)=1$ and $1\leq a(n)<n$ for $n\geq 2$.  Such a sequence has the property that  $a^k(n)=a(a(\cdots(a(n))\cdots))$ goes to $1$ in finitely many steps and we call the pattern $(n,a(n),a^2(n),\ldots,a^\ell(n)=1)$ a jumping pattern from $n$ down to $1$.  In this paper we look at jumping sequences which are weight minimizing with respect to various weight functions (where a weight $w(i,j)$ is given to each jump from $j$ down to $i$).  

Our main result is to show that if $w(i,j)=(i+j)/i^2$ then the cost minimizing jump sequence has the property that the number $m$ satisfies $m=a^q(p)$ for arbitrary $q$ and some $p$ (depending on $q$) if and only if $m$ is a Pell number.\end{abstract}

\section{Introduction}\label{sec:intro}
A {\em jump sequence}\/ is an integer sequence $a(n)$ with the following two properties:
\[
\left\{
\begin{array}{l}
a(1)=1\\
1\leq a(n)<n \qquad \mbox{for~} n\geq 2.
\end{array}
\right.
\]
If we use the notation $a^k(n)=\underbrace{a(a(\cdots(a(n))\cdots))}_{k~{\rm terms}}$ with $a^0(n)=n$, then it follows that given a jump sequence and any $n$ that $(a^0(n),a^1(n),a^2(n),a^3(n),\ldots,a^k(n),\ldots)$ forms an initially decreasing sequence which goes from $n$ to $1$ and then stays at $1$ forever after.  We will truncate the tail of $1$s to form a decreasing sequence of integers from $n$ to $1$ and denote this by $\jump(n)$.

While there are many possible jumping sequences, we will be interested in sequences which minimize the cost of going to $1$ with respect to a weighting function.  Given a weight function which assigns for $i<j$ a weight $w(i,j)$, then for any jumping pattern $(b_0{=}n,b_1,b_2,\ldots,b_\ell=1)$ the weight associated with that pattern is the sum
\[
w(b_0,b_1,b_2,\ldots,b_\ell)=\sum_{i=1}^{\ell} w(b_{i-1},b_{i}).
\]

Given a weight function we construct the weight minimizing jumping sequence with respect to the weight by the following procedure: Let $a(1)=1$ and $b(1)=0$.  For $n\geq 2$ let 
\begin{eqnarray*}
b(n)&=&\min_{1\leq k\leq n-1}\big(w(k,n)+b(k)\big);\\
a(n)&=&\mbox{~lowest $k$ which achieves the minimum for $b(n)$.}
\end{eqnarray*}
From the construction it is clear that the resulting sequence $a(n)$ has the property that $w\big(\jump(n)\big)$ gives the minimum possible weight of all jumping patterns between $n$ and $1$.  It is possible that there are several $k$'s which achieve the minimum, by taking the lowest we avoid ambiguity and so more accurately the sequence $a(n)$ finds the lexicographically first jumping sequence which minimizes the weight of all jumping patterns starting at $n$ and decreasing to $1$.  [An example of the ambiguity is for the weight function $w(i,j)=j-i$ where every decreasing sequence between $n$ and $1$ has the same weight.]

Different weight functions lead to different sequences.  As an example let us consider the weight function $w(i,j)=j/i$ (the associated minimizing jump sequence of this function has applications to constructing optimal key trees for networks; see \cite{paper}).
\[
A=(1, 1, 1, 1, 2, 2, 3, 3, 3, 3, 3, 5, 5, 5, 7, 7, 7, 7, 8, 8, 8, 8, 8, 8, 9, 9, 9, 9, 9, 9, 10, 10, 10, 13, 13, \ldots)
\]

Let us remove repetitions and form a new set $\mathcal{A}_1$ which is the set of all numbers which appear in $A$.  Since $A$ is a non-decreasing sequence (as we will show in Section~\ref{sec:basic}) we can easily form this by looking for the next occurrence of a previously unused number.  Doing this we get the following set:
\[
\mathcal{A}_1=\{1, 2, 3, 5, 7, 8, 9, 10, 13, 16, 17, 18, 19, 20, 21, 22, 23, 25, 26, 27, 28, 29, 40, 41, 42,\ldots\}
\]
More generally, we can think of $\mathcal{A}_1$ as all the possible numbers which are $a^1(m)$ for some $m$ (or in other words appear at depth $1$ in some jump pattern).  We can form similar sets $\mathcal{A}_k$ which are composed of all the possible numbers which are $a^k(m)$ for some $m$.  (Again computing $\mathcal{A}_k$ is done by forming the sequence $a^k(n)$ and then looking for the next occurrence of a previously unused term.) If we apply this to our jumping sequence $A$ we get the following first few sets:
\begin{eqnarray*}
\mathcal{A}_0&{=}&\{1,2,3,4,5,6,7,8,9,10,11,12,13,14,15,16,17,18,19,20,21,22,23,24,25,26,\ldots\}\\
\mathcal{A}_1&{=}&
\{1, 2, 3, 5, 7, 8, 9, 10, 13, 16, 17, 18, 19, 20, 21, 22, 23, 25, 26, 27, 28, 29, 40, 41, 42, \ldots\}\\
\mathcal{A}_2&{=}&
\{1, 2, 3, 5, 7, 8, 9, 17, 18, 20, 21, 22, 23, 25, 26, 27, 28, 42, 43, 45, 49, 50, 51, 52, 53,\ldots\}\\
\mathcal{A}_3&{=}&
\{1, 2, 3, 7, 8, 9, 17, 18, 20, 21, 22, 23, 25, 26, 27, 45, 49, 50, 51, 52, 54, 55, 56, 57,59,\ldots\}\\
\mathcal{A}_4&{=}&
\{ 1, 3, 7, 8, 9, 18, 20, 21, 22, 23, 25, 26, 50, 51, 54, 55, 56, 57, 59, 60, 61, 62, 64, 65,\ldots\}\\
\mathcal{A}_5&{=}&
\{ 1, 3, 7, 8, 9, 20, 21, 22, 23, 25, 26, 51, 54, 55, 56, 57, 59, 60, 61, 62, 64, 65, 66, 70,\ldots\}\\
\mathcal{A}_6&{=}&
\{1, 3, 8, 9, 20, 21, 22, 23, 25, 26, 51, 54, 55, 56, 57, 59, 60, 61, 62, 65, 66, 70, 71, 74 ,\ldots\}\\
\mathcal{A}_7&{=}&
\{1, 3, 8, 9, 20, 21, 22, 23, 25, 26, 54, 55, 56, 57, 59, 60, 61, 62, 65, 70, 71, 139, 140 ,\ldots\}\\
\mathcal{A}_8&{=}&
\{ 1, 3, 8, 9, 21, 22, 23, 25, 54, 55, 56, 57, 59, 60, 61, 62, 65, 70, 71, 140, 143, 144,145\ldots\}\\
\mathcal{A}_9&{=}&
\{ 1, 3, 8, 9, 21, 22, 23, 25, 54, 55, 56, 57, 59, 60, 61, 62, 65, 143, 144, 145, 148, 149,\ldots\}\\
\mathcal{A}_{10}&{=}&
\{ 1, 3, 8, 9, 21, 22, 23, 55, 56, 59, 60, 61, 62, 65, 144, 145, 148, 149, 150, 151,\ldots\}
\end{eqnarray*}

There are a few obvious things to note, first is that $\mathcal{A}_k\subseteq\mathcal{A}_{k-1}$, this easily follows from the definition.  The other thing to notice is that the sequence appears to be collapsing to a specific set which we denote
\[
\mathcal{A}_\infty=\bigcap_{k\geq 0}\mathcal{A}_k.
\]
The set $\mathcal{A}_\infty$ is the set of numbers which appear arbitrarily deep in jump patterns.  Pictorially, we can see the sieving process of these sets in Figure~\ref{fig:sieve} where we have marked the numbers which appear in $\mathcal{A}_0,\ldots,\mathcal{A}_{10}$ between $1$ and $100$ and highlighted the members of $\mathcal{A}_\infty$ red.

\begin{figure}[ht]
\centering
\includegraphics[scale=0.43]{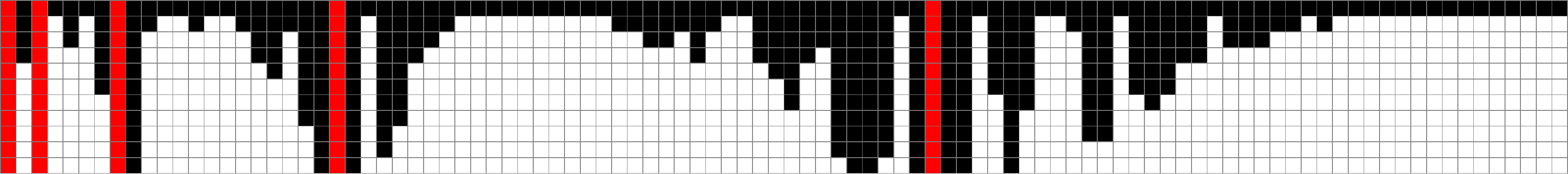}
\caption{The sieving associated with the weight function $w(i,j)=j/i$ and $1\leq n\leq 100$.}
\label{fig:sieve}
\end{figure}

We will proceed as follows.  In Section~\ref{sec:basic} we will establish some basic properties.  Then in Section~\ref{sec:Pell} we will show that for the weight function $w(i,j)=(i+j)/i^2$ the set $\mathcal{A}_\infty$ is the Pell numbers.  In Section~\ref{sec:e} we will look more closely at the weight function $w(i,j)=j/i$.  In Section~\ref{sec:geometric} we will briefly look at finding optimal real jump sequences.  Finally we give some concluding remarks in Section~\ref{sec:conclusion}.

\section{Basic properties of jumping sequences}\label{sec:basic}
To sieve the sequence it is important to know that it is increasing; so that we only need look for the next new term.  We will show for weight functions of a very general nature (including all weight functions that we will consider in this paper) that this is the case.

\begin{proposition}\label{prop:seqincreasing}
Suppose the weight function $w(i,j)$ can be written as
\begin{equation}\label{wform}
w(i,j)=\sum_kf_k(i)g_k(j),
\end{equation}
where each $f_k$ is a {\em non-increasing}\/ function and each $g_k$ is a {\em non-decreasing}\/ function. Then the optimal jump sequence satisfies $a(n-1)\leq a(n)$ for all $n$.  More generally, $a^k(n)$ is non-decreasing for all $k\geq 0$.
\end{proposition}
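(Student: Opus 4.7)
The plan is to extract a Monge-type (anti-crossing) inequality from the structural assumption on $w$, use it in an exchange argument against the lexicographically-first tie-breaking rule to handle $k=1$, and then get $k\ge 2$ by trivial induction on the depth.

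First I would observe that any $w$ of the form \eqref{wform} satisfies the Monge-type inequality
\[
w(i_1,j_1) + w(i_2,j_2) \le w(i_1,j_2) + w(i_2,j_1) \quad \text{whenever } i_1\le i_2 \text{ and } j_1\le j_2,
\]
since the difference of the two sides expands as $\sum_k \bigl(f_k(i_1)-f_k(i_2)\bigr)\bigl(g_k(j_2)-g_k(j_1)\bigr)$, and each factor is nonnegative by the monotonicity hypotheses on $f_k$ and $g_k$.

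Next, to prove $a(n-1)\le a(n)$, I would argue by contradiction. Suppose $p:=a(n-1)>a(n)=:q$. Adding the two optimality inequalities
\[
w(p,n-1)+b(p) \le w(q,n-1)+b(q) \quad\text{and}\quad w(q,n)+b(q)\le w(p,n)+b(p)
\]
yields $w(p,n-1)+w(q,n)\le w(q,n-1)+w(p,n)$. The Monge inequality, applied with $i_1=q<p=i_2$ and $j_1=n-1<n=j_2$, gives the reverse inequality, so both are in fact equalities. Substituting the resulting identity $w(q,n-1)=w(p,n-1)+w(q,n)-w(p,n)$ back into the optimality relation at $n$ shows $w(q,n-1)+b(q)\le w(p,n-1)+b(p)=b(n-1)$, and then the defining minimum forces $w(q,n-1)+b(q)=b(n-1)$. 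Thus $q<p$ also attains the minimum defining $b(n-1)$, contradicting the rule that $a(n-1)$ is the \emph{smallest} such index.

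The general claim follows by induction on $k$. The case $k=0$ is immediate from $a^0(n)=n$, and for the inductive step $a^k(n)=a\bigl(a^{k-1}(n)\bigr)$ is a composition of two non-decreasing functions and hence non-decreasing. The only substantive step is the exchange argument above; once the Monge inequality is in hand the rest is bookkeeping, so I do not anticipate any serious obstacle.
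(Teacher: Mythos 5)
Your proposal is correct and is essentially the paper's own argument: the Monge inequality you isolate is exactly the factored sum $\sum_k\big(f_k(i_1)-f_k(i_2)\big)\big(g_k(j_2)-g_k(j_1)\big)\geq 0$ that the paper derives from the same two optimality inequalities. The only cosmetic difference is that the paper takes the second optimality inequality to be strict (using the lowest-index tie-break up front) to get an immediate contradiction, whereas you work with weak inequalities, deduce equality throughout, and then invoke the tie-break at the end; both routes are sound.
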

\begin{proof}
Suppose that $a(n)<a(n-1)$.  Then we must have the following two inequalities:
\begin{eqnarray*}
w\big(a(n-1),n\big)+b\big(a(n-1)\big)&\geq&w\big(a(n),n\big)+b\big(a(n)\big)\\
w\big(a(n),n-1\big)+b\big(a(n)\big)&>&w\big(a(n-1),n-1\big)+b\big(a(n-1)\big)
\end{eqnarray*}
Rearranging the above for $b\big(a(n)\big)-b\big(a(n-1)\big)$ and then combining it follows
\[\begin{array}{c}\vspace{5pt}
w\big(a(n-1),n-1\big)-w\big(a(n),n-1\big)< w\big(a(n-1),n\big)-w\big(a(n),n\big),\qquad\mbox{or}\\
w\big(a(n),n\big)-w\big(a(n-1),n\big)-w\big(a(n),n-1\big)+w\big(a(n-1),n-1\big)<0.
\end{array}
\]
Using \eqref{wform} the left hand side of the inequality can be rewritten as
\[
\sum_k\underbrace{\big(f_k(a(n))-f_k(a(n-1))\big)}_{\geq 0}\underbrace{\big(g_k(n)-g_k(n-1)\big)}_{\geq 0}\geq 0,
\]
giving a contradiction.

The general statement about the sequence $a^k(n)$ then follows by an easy induction.
\end{proof}

The jump sequence $A$ is closely related to the cost sequence $B$ where $b(n)$ is the minimal cost of all possible jump patterns between $n$ and $1$.  Just as the jump function is non-decreasing we also have a similar result for the cost functions (under some very general assumptions about the weight functions which will again include all weight functions that we consider).

\begin{proposition}\label{prop:costincreasing}
Let $w(i,j)>0$ and increasing as a function of $j$.  Then $b(n-1)\leq b(n)$.
\end{proposition}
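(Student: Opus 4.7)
The plan is to show that the cost for $n-1$ cannot exceed the cost for $n$ by constructing an explicit (generally suboptimal) jump pattern from $n-1$ to $1$ whose weight is at most $b(n)$. The natural candidate is to borrow the tail of the optimal pattern for $n$: replace the first step $n \to a(n)$ by the step $n-1 \to a(n)$ (or do nothing at all if $a(n) = n-1$).

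More precisely, I would split into two cases according to the first jump $a(n)$ in the optimal pattern for $n$. In the case $a(n) \leq n-2$, the value $a(n)$ is an admissible first jump from $n-1$, so
\[
b(n-1) \;\leq\; w\bigl(a(n),n-1\bigr) + b\bigl(a(n)\bigr).
\]
Since $w$ is increasing in its second argument, $w(a(n),n-1) \leq w(a(n),n)$, and the right-hand side is bounded by $w(a(n),n)+b(a(n)) = b(n)$. In the remaining case $a(n) = n-1$, the recursive definition of $b$ gives $b(n) = w(n-1,n) + b(n-1)$, and because $w > 0$ this immediately yields $b(n-1) < b(n)$.

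I do not expect any serious obstacle here: once the suboptimal substitute pattern is written down, both hypotheses on $w$ (positivity, monotonicity in $j$) are each used exactly once, and the only mild subtlety is remembering to handle $a(n)=n-1$ separately so that one never attempts an illegal jump from $n-1$ to a value that is not strictly less than $n-1$. The argument is in fact strict enough to yield $b(n-1) < b(n)$, which is a small bonus but not needed for the stated proposition.
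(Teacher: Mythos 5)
Your proposal is correct and is essentially the paper's argument: the paper also constructs a jump pattern from $n-1$ by reusing the optimal pattern for $n$ (phrased as a proof by contradiction assuming $b(n)<b(n-1)$), relying on monotonicity of $w$ in $j$ and positivity in exactly the same way. Your version is slightly more careful in explicitly separating the case $a(n)=n-1$, but the underlying idea is the same.
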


The proof easily follows by contradiction.  If $b(n)<b(n-1)$ we could construct a sequence jumping from $n-1$ to $1$ with cost less than $b(n)<b(n-1)$; but by definition such a sequence is impossible.

\section{The weight function $w(i,j)=(i+j)/i^2$}\label{sec:Pell}
In this section we will consider using the weight function $w(i,j)=(i+j)/i^2$.  As we did in the introduction we can find the cost minimizing jump sequence
\[
A=(1, 1, 1, 2, 2, 2, 3, 3, 5, 5, 5, 5, 5, 5, 5, 7, 7, 7, 11, 11, 11, 11, 11, 11, 12, 12, 12, 12, 12, 12, \ldots).
\]
If we now sieve this sequence then we see that
\[
\mathcal{A}_7=\{1, 2, 5, 12, 29, 70, 169, 408, 984, 985, 2377, 2378, \ldots\}.
\]
In particular, it looks like the first few elements of $\mathcal{A}_\infty$ will be $1,2,5,12,29,70,169,408$ which correspond to Pell numbers ({\tt A000129} in the {\tt OEIS} \cite{OEIS}).  In this section we will show that this is in fact the case.

\begin{theorem}\label{thm:pell}
Let $w(i,j)=(i+j)/i^2$.  Then $\mathcal{A}_\infty=\{1,2,5,12,29,70,\ldots\}$ is the Pell numbers.
\end{theorem}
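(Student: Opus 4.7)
The plan splits into two directions.

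\emph{Direction 1 (Pell numbers are in $\mathcal{A}_\infty$).} It suffices to show $a(P_k) = P_{k-1}$ for every $k \geq 2$: granted this, $a^r(P_{k+r}) = P_k$ witnesses $P_k \in \mathcal{A}_r$ for every $r \geq 0$, so every Pell number lies in $\mathcal{A}_\infty$. I would prove $a(P_k) = P_{k-1}$ by strong induction on $k$. Under the hypothesis, the cost along the Pell path is $c_k := b(P_k) = c_{k-1} + w(P_{k-1}, P_k)$, and summing the resulting series together with the Binet formula $P_k = \bigl((1+\sqrt 2)^k - (1-\sqrt 2)^k\bigr)/(2\sqrt 2)$ shows $c_k \to 3 + 2\sqrt 2$. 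Optimality then reduces to the inequality
\[
w(j, P_k) + b(j) > w(P_{k-1}, P_k) + c_{k-1}
\qquad \text{for all } j \in [1, P_k - 1] \setminus \{P_{k-1}\}.
\]
For $j > P_{k-1}$ this follows from monotonicity of $b$ (Proposition~\ref{prop:costincreasing}) pitted against the explicit rate at which $w(\cdot, P_k) = 1/(\cdot) + P_k/(\cdot)^2$ decreases in its first argument; for $j < P_{k-1}$ one bounds $c_{k-1} - b(j)$ via the inductive Pell-path cost formula and simplifies using the Cassini-type identity $P_{k-1} P_{k+1} - P_k^2 = (-1)^k$.

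\emph{Direction 2 (nothing else is in $\mathcal{A}_\infty$).} The starting point is a pigeonhole reformulation. Since $a$ is non-decreasing (Proposition~\ref{prop:seqincreasing}) and, for any fixed $m$, $w(m,n) + b(m) \to \infty$ as $n \to \infty$ while the optimal $b(n)$ remains bounded, each preimage interval $I_m := a^{-1}(m)$ is finite. Combined with $\mathcal{A}_{r+1} \subseteq \mathcal{A}_r$, pigeonhole then gives
\[
m \in \mathcal{A}_\infty \iff I_m \cap \mathcal{A}_\infty \neq \emptyset,
\]
so $\mathcal{A}_\infty$ is the maximum subset $S \subseteq \mathbb{N}$ with $I_m \cap S \ne \emptyset$ for every $m \in S$. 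The Pell numbers form such an $S$ by Direction~1. For the reverse inclusion, I would induct on $k$ to prove $\mathcal{A}_\infty \cap [1, P_k] = \{P_1, \ldots, P_k\}$: the inductive step requires ruling out any $m \in (P_{k-1}, P_k)$, which by the reformulation would otherwise produce an infinite backward chain $m = m_0, m_1, m_2, \ldots$ entirely in $\mathcal{A}_\infty$ with $a(m_{i+1}) = m_i$, and the goal is to show no such chain of non-Pell numbers exists.

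\emph{Main obstacle.} The crux is this last inductive step. In the interval $(P_{k-1}, P_k)$ the jump sequence produces one or two ``intermediate'' non-Pell destinations (for instance, $a$ takes the values $7$ and $11$ between $P_3 = 5$ and $P_4 = 12$); classifying these intermediates uniformly and tracking the shrinkage of their iterated preimages through repeated sieving is the combinatorial heart of the proof. The unifying heuristic is that $r = 1+\sqrt 2$ is the unique minimizer of the asymptotic per-jump cost $r(1+r)/(r-1)$, so only backward trajectories converging to this ratio can survive indefinitely, and these are precisely the Pell chains.
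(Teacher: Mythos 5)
Your high-level architecture is reasonable --- the pigeonhole reformulation of $\mathcal{A}_\infty$ via finite preimage intervals is a nice observation, and the split into ``Pell numbers survive'' and ``nothing else survives'' is a sensible decomposition --- but the proposal stops exactly where the proof has to start. In Direction~2 you explicitly defer ``the combinatorial heart'' to a heuristic about $1+\sqrt2$ minimizing the asymptotic per-jump cost; that heuristic is correct, but turning it into a proof is the entire content of the theorem. The paper cashes it in as follows: (i) jumping along Pell numbers gives the uniform upper bound $b(n)<W:=\sum_k (p_k+p_{k+1})/p_k^2$; (ii) a rescaling recurrence $q_n=1+2\sqrt{q_{n-1}}$ shows any decreasing real chain of length $n$ down to $1$ costs at least $q_n\to 3+2\sqrt2$, so a value $t$ sitting immediately above $p_k$ in arbitrarily long optimal patterns must satisfy $W\geq \sum_{i<k}(p_i+p_{i+1})/p_i^2+(p_k+t)/p_k^2+(3+2\sqrt2)/t$; and (iii) a convexity/monotonicity argument reduces the failure of this inequality for all $t\neq p_{k+1}$ to the two values $t=p_{k+1}\pm1$, which are then verified by explicit polynomial inequalities in $r^k$ against a tail bound for $W$. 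Without a quantitative substitute for (ii) and (iii), your induction on $k$ cannot close; ``tracking the shrinkage of iterated preimages'' of the intermediate values (your $7$ and $11$) is not obviously tractable, and the paper does not attempt it.

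There is also a concrete error in Direction~1. For $j>p_{k-1}$ you propose to get $w(j,p_k)+b(j)>w(p_{k-1},p_k)+b(p_{k-1})$ from monotonicity of $b$, but $w(i,j)=1/i+j/i^2$ is \emph{decreasing} in its first argument, so $w(j,p_k)<w(p_{k-1},p_k)$ there, while Proposition~\ref{prop:costincreasing} only gives $b(j)\geq b(p_{k-1})$; the two bounds point the same (unhelpful) way and yield nothing. What you actually need is a strictly positive lower bound on $b(j)-b(p_{k-1})$ for non-Pell $j$, which is precisely the exclusion you deferred to Direction~2. The paper sidesteps this circularity by never proving $a(p_k)=p_{k-1}$ at all: it argues top-down that the $(k{+}1)$-st entry of every sufficiently long optimal pattern must eventually be $p_{k+1}$, using only the global bounds (i)--(ii), and both inclusions of $\mathcal{A}_\infty$ fall out of that single induction.
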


The Pell numbers can be defined recursively by $p_1=1$, $p_2=2$ and $p_n=2p_{n-1}+p_{n-2}$ for $n\geq 3$.  It is an easy exercise to show that
\[
p_n={1\over 2\sqrt2}\big(r^n-\bar{r}^n\big),\quad \mbox{where }r=1+\sqrt{2}\mbox{ and }\bar{r}=1-\sqrt{2}.
\]
Alternatively, the Pell numbers can be defined by $p_1=1$ and
$p_n=\big\lfloor rp_{n-1}+0.5\big\rfloor$ for $n\geq 2$.  That is, we can get the next term by multiplying by $r=1+\sqrt{2}$ and then rounding to the nearest integer.  As a consequence of this we have
\begin{equation}\label{eq:pellrounding}
-{1\over2}\leq p_n-rp_{n-1}\leq{1\over2}.
\end{equation}

To establish Theorem~\ref{thm:pell} it will be convenient to think of starting at $1$ and jumping to $n$. The proof will be to show by induction on $k$ that an optimal jump sequence must eventually (i.e., as $n$ gets sufficiently large) start $1=p_1,p_2,\ldots,p_k$, showing that the elements of $\mathcal{A}_\infty$ are the Pell numbers.  

Before starting the proof we need to introduce some constants which will play a role in the analysis.  Let
\[
W=\sum_{k=1}^\infty {p_k+p_{k+1}\over p_k^2}\approx{\tt 5.91570247\ldots}.
\]
By Proposition~\ref{prop:costincreasing} and using the Pell numbers to jump to Pell numbers we have that for any number $n$ that
\[
b(n)\leq b(p_n)\leq \sum_{k=1}^{n-1} {p_k+p_{k+1}\over p_k^2} < W,
\]
so that $W$ is an upper bound for the cost function for all $n$.  (As a result of Theorem~\ref{thm:pell} it will follow that $b(n)\to W$ as $n\to\infty$.)  We will also need to get a bound on the size of the tail of $W$.

\begin{lemma}\label{lem:bound}
$\displaystyle{\sum_{i=2j+1}^\infty {p_i+p_{i+1}\over p_i^2}<{2(2+\sqrt2)\over r^{2j}}+{1\over 8r^{6j}}}.$
\end{lemma}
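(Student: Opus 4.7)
My plan is to use the Binet-style formula $p_i = (r^i - \bar r^i)/(2\sqrt 2)$ together with the identity $\bar r = -1/r$ to write everything as an explicit rational function of $r^i$ and $(-1)^i$, and then split each summand into a dominant geometric piece plus a small alternating correction.

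First I would derive
\[
p_i + p_{i+1} = \frac{r^{i+1}}{2} - \frac{(-1)^i}{2r^{i+1}}, \qquad p_i^2 = \frac{(r^{2i}-(-1)^i)^2}{8r^{2i}},
\]
where the first identity uses $1+r = r\sqrt 2$ and $1+\bar r = -\bar r\sqrt 2$. Taking the ratio gives $(p_i+p_{i+1})/p_i^2 = 4 r^{i-1}(r^{2i+2}-(-1)^i)/(r^{2i}-(-1)^i)^2$. Using $r^2-1 = 2r$ to rewrite $r^{2i+2}-(-1)^i = r^2(r^{2i}-(-1)^i) + 2r(-1)^i$ and performing a standard partial-fraction-style split, a brief simplification yields the clean decomposition
\[
\frac{p_i+p_{i+1}}{p_i^2} = \frac{4}{r^{i-1}} + (-1)^i D_i, \qquad D_i := \frac{4}{r^{i-1}(r^{2i}-(-1)^i)} + \frac{8r^i}{(r^{2i}-(-1)^i)^2} > 0.
\]

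The main term is a geometric series that (using $4r/(r-1) = 2(2+\sqrt 2)$) evaluates exactly to $\sum_{i=2j+1}^\infty 4/r^{i-1} = 2(2+\sqrt 2)/r^{2j}$, so what remains is to bound the alternating tail $\sum_{i=2j+1}^\infty (-1)^i D_i$. A direct ratio check shows $D_{i+1}/D_i$ is of order $r^{-3}$; in fact $D_{i+1} < D_i$ strictly for every $i \geq 1$ (verified separately for the two parities, since the denominators toggle between $r^{2i}-1$ and $r^{2i}+1$). Because the starting index $2j+1$ is odd, the first term $(-1)^{2j+1}D_{2j+1} = -D_{2j+1}$ is negative, and pairing consecutive terms $(-D_{2k+1}+D_{2k+2})$ for $k \geq j$ produces a sum of strictly negative pairs, so $\sum (-1)^i D_i \leq 0$. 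Combining yields
\[
\sum_{i=2j+1}^\infty \frac{p_i+p_{i+1}}{p_i^2} \leq \frac{2(2+\sqrt 2)}{r^{2j}} < \frac{2(2+\sqrt 2)}{r^{2j}} + \frac{1}{8r^{6j}},
\]
so the extra $1/(8r^{6j})$ is merely harmless safety slack. The main technical obstacle I anticipate is cleanly verifying the monotonicity $D_{i+1}<D_i$ across both parities of $i$, but this reduces to an elementary comparison once the parity-dependent signs in the denominators are carefully tracked.
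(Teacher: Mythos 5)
Your decomposition checks out: with $\bar r=-1/r$ one indeed gets $p_i+p_{i+1}=\tfrac12\big(r^{i+1}-(-1)^i r^{-(i+1)}\big)$ and $p_i^2=(r^{2i}-(-1)^i)^2/(8r^{2i})$, the identity $r^2-1=2r$ gives exactly your split $\tfrac{p_i+p_{i+1}}{p_i^2}=\tfrac{4}{r^{i-1}}+(-1)^iD_i$ with $D_i>0$, the geometric part sums to $\tfrac{4}{r^{2j}}\cdot\tfrac{r}{r-1}=\tfrac{2(2+\sqrt2)}{r^{2j}}$ since $r-1=\sqrt2$, and the monotonicity $D_{i+1}<D_i$ (you only actually need it at odd $i$, where the denominator goes from $r^{2i}+1$ to $r^{2i+2}-1$) is an easy comparison, so the absolutely convergent alternating tail starting at the odd index $2j+1$ is negative. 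This is correct, and it proves something strictly stronger than the lemma --- the $1/(8r^{6j})$ term is not needed at all. The paper takes a related but genuinely different route: it also pairs the terms with indices $2k+1$ and $2k+2$, but instead of an exact additive split it keeps each paired term as a single fraction $4(r^{m}+\bar r^{m})/(r^{2m-2}+\bar r^{2m-2}\pm2)$ and then drops the $\pm2$ from the denominators, justified by the criterion that $\tfrac{A}{X+2}+\tfrac{B}{Y-2}<\tfrac{A}{X}+\tfrac{B}{Y}$ iff $BX(X+2)<AY(Y-2)$ (a ``simple though a bit tedious'' cross-multiplication), after which it sums two geometric series and accumulates the $1/(8r^{6j})$ slack. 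Your exact partial-fraction split buys a cleaner argument (the cancellation between consecutive terms is made manifest rather than estimated), a sharper constant, and avoids the tedious verification; the paper's version requires less algebraic setup per term and is content with a bound that is still good enough for the four cases in the proof of Theorem~\ref{thm:pell}. The only thing left for you to write out is the elementary check $D_{2k+2}<D_{2k+1}$, which does hold (both summands of $D_i$ visibly decrease since $r(r^{2i+2}-1)>r^{2i}+1$ and $(r^{2i+2}-1)^2>r(r^{2i}+1)^2$ for $i\geq1$).
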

\begin{proof}
 To estimate the sum on the left we will pair up consecutive terms.  Substituting the definition of the Pell numbers and simplifying we have
\[
{p_{2k+1}+p_{2k+2}\over p_{2k+1}^2}={4(r^{2k+2}+\bar{r}^{2k+2})\over r^{4k+2}+\bar{r}^{4k+2}+2}\mbox{~~~and~~~}
{p_{2k+2}+p_{2k+3}\over p_{2k+2}^2}={4(r^{2k+3}+\bar{r}^{2k+3})\over r^{4k+4}+\bar{r}^{4k+4}-2}.
\]
Combining, we have
\begin{eqnarray}
{p_{2k+1}+p_{2k+2}\over p_{2k+1}^2}+{p_{2k+2}+p_{2k+3}\over p_{2k+2}^2}&=&
{4(r^{2k+2}+\bar{r}^{2k+2})\over r^{4k+2}+\bar{r}^{4k+2}+2}+{4(r^{2k+3}+\bar{r}^{2k+3})\over r^{4k+4}+\bar{r}^{4k+4}-2} \notag \\
&\leq&{4(r^{2k+2}+\bar{r}^{2k+2})\over r^{4k+2}+2}+{4(r^{2k+3}+\bar{r}^{2k+3})\over r^{4k+4}-2} \notag \\
&<&{4(r^{2k+2}+\bar{r}^{2k+2})\over r^{4k+2}}+{4(r^{2k+3}+\bar{r}^{2k+3})\over r^{4k+4}} \label{eq:hardstep} \\
&=&{4\sqrt2r^{2k+4}+4(8-5\sqrt2)\bar r^{2k}\over r^{4k+4}} \notag\\
&<&{4\sqrt2\over r^{2k}}+{1\over 9r^{6k}}. \notag
\end{eqnarray}
The key step in this is \eqref{eq:hardstep} which follows by noting that if $A,B,X, Y-2>0$ then
\[
{A\over X+2}+{B\over Y-2}<{A\over X}+{B\over Y}
\mbox{~~if and only if~~}BX(X+2)<AY(Y-2).
\]
It is simple (though a bit tedious) to verify that this last statement holds in our case.

Using this we now have
\begin{eqnarray*}
\sum_{i=2j+1}^\infty{p_i+p_{i+1}\over p_i^2}&=&
\sum_{k\geq j}\bigg({p_{2k+1}+p_{2k+2}\over p_{2k+1}^2}+{p_{2k+2}+p_{2k+3}\over p_{2k+2}^2}\bigg)\\
&<&\sum_{k\geq j}\bigg({4\sqrt2\over r^{2k}}+{1\over 9r^{6k}}\bigg)\\
&=&{4\sqrt2 r^2\over r^{2j}(r^2-1)}+{r^3\over9r^{6j}(r^3-1)}\\
&<&{2(2+\sqrt2)\over r^{2j}}+{1\over 8r^{6j}}.
\end{eqnarray*}
\end{proof}

The other key ingredient in the proof of Theorem~\ref{thm:pell} is a lower bound on the weight of jump sequences of length $n$ as $n\to\infty$.

\begin{lemma}\label{lemma:pell}
For the weight function $w(i,j)=(i+j)/i^2$, and $a_i$ nondecreasing sequences of real numbers, we have
\[
\lim_{n\to\infty}\bigg( \inf_{1\leq a_1\leq a_2\leq \cdots\leq a_n}w(a_n,\ldots,a_1,1)\bigg)= 3+2\sqrt2.
\]
\end{lemma}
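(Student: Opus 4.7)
The plan is to reparametrize in terms of successive ratios and reduce the infimum to a simple one-dimensional fixed-point iteration.

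Setting $a_0=1$ and $c_i=a_i/a_{i-1}$ for $i=1,\ldots,n$ (so that each $c_i\geq 1$ and $a_i=c_1\cdots c_i$), the weight of the pattern $(a_n,\ldots,a_1,1)$ becomes
\[
\sum_{i=1}^{n}\frac{a_{i-1}+a_i}{a_{i-1}^2}=\sum_{i=1}^{n}\frac{1+c_i}{c_1 c_2\cdots c_{i-1}}.
\]
Let $M_n$ denote the infimum of the right-hand side over $c_1,\ldots,c_n\geq 1$. Fixing the first ratio $c_1=r$ and reindexing the remaining sum (it is again a sum of the same form in $n-1$ ratios, scaled by an overall factor $1/r$) yields
\[
M_n=\inf_{r\geq 1}\bigl[(1+r)+M_{n-1}/r\bigr],\qquad M_1=2.
\]

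Next I would compute the inner infimum. The derivative in $r$ vanishes at $r^{\ast}=\sqrt{M_{n-1}}$, which is feasible because every admissible pattern contains the step from $a_1$ down to $1$ of cost $1+a_1\geq 2$, so $M_n\geq 2$ for all $n$, and in particular $r^{\ast}\geq\sqrt{2}>1$. Substituting yields the clean recursion $M_n=1+2\sqrt{M_{n-1}}$.

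Finally I would analyze the iteration $\phi(x)=1+2\sqrt{x}$. Its fixed points satisfy $(x-1)^2=4x$, i.e.\ $x^2-6x+1=0$, so $x=3\pm 2\sqrt{2}$. Using the identity $\sqrt{3+2\sqrt{2}}=1+\sqrt{2}$, a one-line induction shows that $2\leq M_{n-1}<3+2\sqrt{2}$ implies $M_{n-1}<M_n<3+2\sqrt{2}$. Hence $(M_n)$ is monotone increasing and bounded above, so its limit exists and must be a fixed point of $\phi$ lying in $[2,3+2\sqrt{2}]$, which can only be $3+2\sqrt{2}$.

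The main point to handle carefully is the reduction step: one must verify that peeling off one ratio at a time is legitimate (the identity $\inf_{c_1,\ldots,c_n}=\inf_{c_1}\inf_{c_2,\ldots,c_n}$ is standard, and after factoring out $1/r$ the inner infimum is exactly $M_{n-1}/r$) and that the critical point $r^{\ast}$ stays in the interior of the feasible region uniformly in $n$, both of which follow from the a priori bound $M_n\geq 2$.
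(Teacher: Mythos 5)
Your proposal is correct and follows essentially the same route as the paper: peel off the first jump, rescale the tail to obtain the recursion $M_n=1+2\sqrt{M_{n-1}}$ with $M_1=2$, and identify the limit as the fixed point $3+2\sqrt2$. You merely spell out two details the paper leaves as exercises (feasibility of the critical point $r^\ast=\sqrt{M_{n-1}}\geq\sqrt2>1$ and the monotone-bounded convergence of the iteration).
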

\begin{proof}
Let us denote the infimum of the inside term for a fixed $n$ by $q_n$.  It is easy to see that $q_1=2$ and is achieved by letting $a_1=1$.  For $(a_n,\ldots,a_1,1)$ we have (with some rearranging)
\[
w(a_n,\ldots,a_1,1)=(1+a_1)+{1\over a_1} w(a_n/a_1,\ldots,a_1/a_1)\geq 
(1+a_1)+{1\over a_1}q_{n-1}.
\]
A simple optimization problem then shows that the minimal term on the right is achieved by letting $a_1=\sqrt{q_{n-1}}$ so that
\[
w(a_n,\ldots,a_1,1)\geq 1+2\sqrt{q_{n-1}},
\]
on the other hand it is easy to see that this minimum can be achieved.  So we have the recurrence
\[
q_1=2,\qquad q_n=1+2\sqrt{q_{n-1}}\mbox{~~~for } n\geq 2.
\]
It is an easy exercise to show that $\lim_{n\to\infty}q_n=3+2\sqrt{2}$, establishing the result.
\end{proof}

It should be noted that $\sqrt{3+2\sqrt{2}}=1+\sqrt{2}$ which also plays a role in  the Pell numbers, so that in the limit the jumping sequence with minimal cost is composed of jumps of length $1+\sqrt{2}$;  we will return to this in Section~\ref{sec:geometric}.

\begin{proof}[Proof of Theorem~\ref{thm:pell}]
Clearly the jump sequence must start with $1$, establishing the base case.  Now  assume that eventually all jump sequences must start $1=p_1,p_2,\ldots,p_k$, we need to show that the next term must eventually be $p_{k+1}$.

So suppose that $t$ is a number such that for arbitrary $n$ we have jump sequences of the form $(b_n,\ldots,b_1,t,p_k,\ldots,p_1)$.  Then it must be the case that
\begin{eqnarray*}
W>w\big(\jump(b_n)\big)&=&w(b_n,\ldots,b_1,t,p_k,\ldots,p_1)\\
&=&\sum_{i=1}^{k-1}{p_i+p_{i+1}\over p_i^2}+{p_k+t\over p_k^2}+{1\over t}w(b_n/t,\ldots,b_1/t,1)\\
&\geq&\sum_{i=1}^{k-1}{p_i+p_{i+1}\over p_i^2}+{p_k+t\over p_k^2}+{q_n\over t},
\end{eqnarray*}
where $q_n$ is as defined in Lemma~\ref{lemma:pell}.  Since this holds for all $n$, it also holds for the limit and so $t$ must satisfy
\begin{equation}\notag
W\geq \sum_{i=1}^{k-1}{p_i+p_{i+1}\over p_i^2}+{p_k+t\over p_k^2}+{3+2\sqrt2\over t}.
\end{equation}
Substituting in the definition of $W$ and simplifying this becomes
\begin{equation}\notag
\sum_{i=k+1}^\infty{p_i+p_{i+1}\over p_i^2}\geq{t-p_{k+1}\over p_k^2}+{3+2\sqrt{2}\over t}.
\end{equation}
By the definition of $W$, clearly if we jump along the Pell numbers we have that $t=p_{k+1}$ satisfies this relationship.  The remaining part of the proof is to show that if $t\neq p_{k+1}$ then
\begin{equation}\label{eqn:pellkey}
\sum_{i=k+1}^\infty{p_i+p_{i+1}\over p_i^2}<{t-p_{k+1}\over p_k^2}+{3+2\sqrt{2}\over t}.
\end{equation}

\begin{claim}
It suffices to verify \eqref{eqn:pellkey} does not hold for $t=p_{k+1}\pm1$.
\end{claim}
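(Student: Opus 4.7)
The plan is a short convexity argument applied to the right-hand side of \eqref{eqn:pellkey}, viewed as a function of a real variable $t>0$. Write
\[
f(t) = {t-p_{k+1}\over p_k^2} + {3+2\sqrt{2}\over t},
\]
and note that the left-hand side of \eqref{eqn:pellkey}, call it $S$, is a fixed positive constant independent of $t$, so \eqref{eqn:pellkey} is equivalent to $S<f(t)$. Since $f''(t)=2(3+2\sqrt{2})/t^3>0$ for $t>0$, $f$ is strictly convex, and solving $f'(t)=1/p_k^2-(3+2\sqrt{2})/t^2=0$ yields the unique minimum at $t^{*}=\sqrt{3+2\sqrt{2}}\,p_k=(1+\sqrt{2})p_k=rp_k$.

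Next I would invoke the Pell rounding property \eqref{eq:pellrounding}, which gives $|p_{k+1}-rp_k|\leq 1/2$ and so places $t^{*}$ in the interval $[p_{k+1}-1/2,\,p_{k+1}+1/2]$. Combined with strict convexity ($f$ decreasing on $(0,t^{*}]$ and increasing on $[t^{*},\infty)$), the inequalities $p_{k+1}+1\geq t^{*}$ and $p_{k+1}-1\leq t^{*}$ immediately give $f(t)\geq f(p_{k+1}+1)$ for every integer $t\geq p_{k+1}+1$ and $f(t)\geq f(p_{k+1}-1)$ for every integer $t\leq p_{k+1}-1$.

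Consequently, once one has checked $S<f(p_{k+1}+1)$ and $S<f(p_{k+1}-1)$, the bound $S<f(t)$ follows automatically for every integer $t\neq p_{k+1}$ with $t>p_k$, which is precisely \eqref{eqn:pellkey} in all remaining cases. For the edge case $k=1$, the lower candidate $t=p_{k+1}-1=1=p_k$ is not an admissible jump target (jumps are strictly decreasing), so may be dropped, leaving only $t=p_{k+1}+1$ to examine. The only slightly delicate step is sandwiching $t^{*}$ between $p_{k+1}-1$ and $p_{k+1}+1$ via \eqref{eq:pellrounding}; everything else reduces to the one-variable monotonicity of a strictly convex function.
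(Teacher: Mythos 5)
Your argument is correct and is essentially the paper's own: the same function $f(t)$, the same location of its minimum at $t^{*}=rp_k$ via $f'$, and the same use of \eqref{eq:pellrounding} to trap $t^{*}$ between $p_{k+1}-1$ and $p_{k+1}+1$ so that monotonicity on either side reduces everything to the two boundary values. The added remark about the $k=1$ edge case is a harmless refinement not present in the paper.
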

To see this let
\[
f(t)= {t-p_{k+1}\over p_k^2}+{3+2\sqrt{2}\over t},
\]
then
\[
f'(t)={1\over p_k^2}-{3+2\sqrt2\over t^2}={t^2-(rp_k)^2\over (p_kt)^2}={(t-rp_k)(t+rp_k)\over(p_kt)^2}.
\]
It now follows using \eqref{eq:pellrounding} that for $t\leq p_{k+1}-1$ that $f(t)$ is decreasing and that for $t\geq p_{k+1}+1$ that $f(t)$ is increasing; so if the relationship fails for $t=p_{k+1}\pm1$ then it must fail for every other value of $t$ besides $p_{k+1}$.

\begin{case} $k$ is even and $t=p_{k+1}+1$. \end{case}

By applying Lemma~\ref{lem:bound} to the left hand side of \eqref{eqn:pellkey} it suffices to show that
\[
{2(2+\sqrt2)\over r^{k}}+{1\over 8r^{3k}}<
{1\over p_k^2}+{3+2\sqrt{2}\over p_{k+1}+1}.
\]
Using the definition of Pell numbers along with simplifying this is equivalent to showing
\begin{multline*}
(110\sqrt2+221)r^{6k}+(254\sqrt2+444)r^{5k}+(36\sqrt2+69)r^{4k}\\ 
-(184+124\sqrt2)r^{3k}-(33+18\sqrt2)r^{2k}-(4+2\sqrt2)r^k-1>0.
\end{multline*}
This relationship holds for $k\geq 1$, concluding this case.

\begin{case} $k$ is even and $t=p_{k+1}-1$. \end{case}

By applying Lemma~\ref{lem:bound} to the left hand side of \eqref{eqn:pellkey} it suffices to show that
\[
{2(2+\sqrt2)\over r^{k}}+{1\over 8r^{3k}}<
{-1\over p_k^2}+{3+2\sqrt{2}\over p_{k+1}-1}.
\]
Using the definition of Pell numbers along with simplifying this is equivalent to showing
\begin{multline*}
(110\sqrt2+221)r^{6k}-(254\sqrt2+444)r^{5k}+(36\sqrt2+69)r^{4k}\\+(184+124\sqrt2)r^{3k}-(33+18\sqrt2)r^{2k}+(4+2\sqrt2)r^k-1>0.
\end{multline*}
Again this holds for $k\geq 1$, concluding this case.

\begin{case} $k$ is odd and $t=p_{k+1}+1$. \end{case}

By pulling off the first term and applying Lemma~\ref{lem:bound} to the left hand side of \eqref{eqn:pellkey} it suffices to show that
\[
{p_{k+1}+p_{k+2}\over p_{k+1}^2}+{2\sqrt2\over r^k}+{5\sqrt2-7\over 8r^{3k}}<
{1\over p_k^2}+{3+2\sqrt{2}\over p_{k+1}+1}.
\]
Using the definition of Pell numbers along with simplifying this is equivalent to showing
\begin{multline*}
(5195\sqrt2+7353)r^{10k}-(10566\sqrt2+14920)r^{9k}-(1927\sqrt2+2763)r^{8k}
-(2760\sqrt2+3792)r^{7k}\\
+(66+30\sqrt2)r^{6k}+(832+572\sqrt2)r^{5k}-(62-2\sqrt2)r^{4k}
-(72\sqrt2+48)r^{3k}\\+(23\sqrt2-11)r^{2k}+(8-6\sqrt2)r^k+5\sqrt2-7>0
\end{multline*}
Again this holds for $k\geq 1$, concluding this case.

\begin{case} $k$ is odd and $t=p_{k+1}-1$. \end{case}

By pulling off the first term and applying Lemma~\ref{lem:bound} to the left hand side of \eqref{eqn:pellkey} it suffices to show that
\[
{p_{k+1}+p_{k+2}\over p_{k+1}^2}+{2\sqrt2\over r^k}+{5\sqrt2-7\over 8r^{3k}}<
{1\over p_k^2}+{3+2\sqrt{2}\over p_{k+1}+1}.
\]
Using the definition of Pell numbers along with simplifying this is equivalent to showing
\begin{multline*}
(5195\sqrt2+7353)r^{10k}+(10566\sqrt2+14920)r^{9k}-(1927\sqrt2+2763)r^{8k}+(2760\sqrt2+ 3792)r^{7k}\\
+(66+30\sqrt2)r^{6k}-(832+572\sqrt2)r^{5k}-(62-2\sqrt2)r^{4k}+(72\sqrt2+48)r^{3k}\\
+(23\sqrt2-11)r^{2k}-(8-6\sqrt2)r^k+5\sqrt2-7>0
\end{multline*}
Again this holds for $k\geq 1$, concluding this case and the proof.
\end{proof}

\section{The weight function $w(i,j)=j/i$}\label{sec:e}
One key element in the proof of Theorem~\ref{thm:pell} was that we had a good guess about an optimal way to jump to infinity; and further the total cost of jumping to infinity was finite.  On the other hand for the weight function $w(i,j)=j/i$ we have by the arithmetic-geometric mean inequality that
\[
b(n)=w\big(a^0(n),\ldots,a^k(n)\big)=\sum_{i=1}^{k}{a^{i}(n)\over a^{i+1}(n)}\geq k\sqrt[k]{a^0(n)\over a^k(n)}=kn^{1/k}\geq e\ln n,
\]
so now the weight function is divergent.  On the other hand we can get some control on the weight function, the following result was proved by the authors in \cite{paper}.

\begin{theorem}\label{thm:alpha}
Let $d(n)$ be an integer sequence defined by $d(0)=1$ and $d(n)=\lfloor ed(n) +0.5\rfloor$ for $n\geq 1$ (i.e., multiply by $e$ and then round at each stage); also let
\[
\alpha=\lim_{n\to\infty}\bigg(\sum_{i=1}^n{d(i)\over d(i-1)}-e\ln\big(d(n)\big)\bigg)\approx {\tt 0.01435753\ldots}.
\]
Then $b(n)\leq e\ln n +\alpha + o(1)$.
\end{theorem}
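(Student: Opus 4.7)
The plan is to exhibit an explicit jump pattern from $n$ down to $1$ whose cost is at most $e\ln n + \alpha + o(1)$. The definition of $\alpha$ provides tight control on costs along the sequence $d$, so I would use the $d$-sequence as a scaffold for the lower portion of the pattern and bridge the top of the scaffold up to $n$ using an integer approximation of a geometric progression whose common ratio is close to $e$.

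First I would fix an intermediate level $m=m(n)$ satisfying both $m\to\infty$ and $L:=\ln(n/d(m))\to\infty$ as $n\to\infty$; for concreteness, take $m=\lfloor(\ln n)/2\rfloor$, so that $d(m)$ grows like $\sqrt{n}$. The jump pattern then splits into two phases. The descent phase is $d(m)\to d(m-1)\to\cdots\to d(0)=1$, whose cost $\sum_{i=1}^m d(i)/d(i-1)$ equals $e\ln d(m)+\alpha+o(1)$ by the very definition of $\alpha$ (since $m\to\infty$). The bridge phase is $n=x_0>x_1>\cdots>x_k=d(m)$, where $k=\lfloor L+\tfrac12\rfloor$ and $x_j$ is the integer nearest to $n(d(m)/n)^{j/k}$, so that consecutive ratios cluster around $\rho:=(n/d(m))^{1/k}$.

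Next I would estimate the cost of the bridge. Writing $x_j=n\rho^{-j}+\delta_j$ with $|\delta_j|\le\tfrac12$, a direct expansion gives $x_j/x_{j+1}=\rho+O(\rho^{j+1}/n)$, and summing these rounding errors over $j$ contributes $O(1/d(m))=o(1)$. Thus the bridge cost equals $k\rho+o(1)$. A Taylor expansion of $k\rho=k\exp(L/k)$ about $k=L$ shows that the first-order deviation in $k-L$ cancels (the function is stationary there), leaving $k\rho=eL+O(1/L)$. Since $L\to\infty$ by our choice of $m$, the bridge cost is $e\ln(n/d(m))+o(1)$. Adding the two phases yields $b(n)\le e\ln(n/d(m))+e\ln d(m)+\alpha+o(1)=e\ln n+\alpha+o(1)$.

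The main obstacle is the cancellation underlying $k\rho=eL+O(1/L)$: the map $k\mapsto ke^{L/k}$ is stationary precisely at the optimal real value $k=L$, which is the sole reason integer rounding of $k$ contributes only $o(1)$ rather than a positive constant; one-step or constant-step bridges would instead leave an irreducible error on the order of $0.3$. Balancing the two growth requirements on $m(n)$---$m\to\infty$ for the descent estimate and $L\to\infty$ for the bridge estimate---is likewise essential, but the choice $m\asymp(\ln n)/2$ comfortably satisfies both.
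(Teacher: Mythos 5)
The paper does not actually prove Theorem~\ref{thm:alpha}; it imports the result from reference [1], so there is no in-paper proof to compare against. Judged on its own terms, your argument is correct and complete in outline. The two-phase construction is sound: the descent cost $\sum_{i=1}^m d(i)/d(i-1)=e\ln d(m)+\alpha+o(1)$ is immediate from the definition of $\alpha$ once $m\to\infty$, and your bridge analysis correctly identifies the two error sources and kills both. The rounding errors in the bridge sum to $O\bigl(\rho^{k+1}/(n(\rho-1))\bigr)=O(1/d(m))=o(1)$ since $\rho$ stays near $e$, and the stationarity of $k\mapsto ke^{L/k}$ at $k=L$ (with $g''(L)=e/L$) is exactly the point that makes the integer constraint on $k$ cost only $O(\theta^2/L)=O(1/L)=o(1)$; your side remark that a single splice onto the $d$-scaffold leaves an irreducible error of about $e/(e-1)-e\ln\bigl(e/(e-1)\bigr)\approx 0.335$ shows you understand why the multi-step bridge is genuinely necessary. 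The choice $m=\lfloor(\ln n)/2\rfloor$ does make both $m$ and $L$ tend to infinity, since $d(m)\sim c\,e^m$ for some constant $c>0$. Two small points you should make explicit in a full write-up: the $x_j$ are strictly decreasing integers (clear, since all terms are at least $d(m)\to\infty$ and consecutive ideal values differ by a factor $\rho\geq 2$), and the endpoints $x_0=n$, $x_k=d(m)$ are hit exactly by your rounding rule so the two phases concatenate into a legitimate jump pattern, giving $b(n)$ as an upper bound by minimality.
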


The sequence $d(n)=(1,3,8,22,60,163,443\ldots)$ is {\tt A024581} in the {\tt OEIS} \cite{OEIS}. The definition of this sequence is analogous to the definition of the Pell numbers given in \eqref{eq:pellrounding}.

\begin{corollary}
If $b(n)>e\ln n+\alpha$ then $n\notin \mathcal{A}_\infty$.
\end{corollary}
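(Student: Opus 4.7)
The plan is to prove the contrapositive: $n \in \mathcal{A}_\infty \Rightarrow b(n) \le e\ln n + \alpha$. By definition of $\mathcal{A}_\infty$, for every $k \ge 1$ there is some $m_k$ with $a^k(m_k) = n$. Since $a(j) < j$, the orbit $m_k > a(m_k) > \cdots > a^k(m_k) = n$ consists of $k+1$ distinct integers, so $m_k \ge n+k$, and in particular $m_k \to \infty$ as $k\to\infty$.

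For each such $m_k$, the optimal jump pattern from $m_k$ to $1$ passes through $n$ after exactly $k$ steps, and by the principle of optimality built into the definition of $b$ its tail from $n$ onward is itself an optimal $n$-to-$1$ pattern, contributing cost exactly $b(n)$. Hence
\[
b(m_k) \;=\; \sum_{i=1}^{k}\frac{a^{i-1}(m_k)}{a^i(m_k)}\;+\;b(n).
\]
Since the product $\prod_{i=1}^{k} a^{i-1}(m_k)/a^i(m_k)$ telescopes to $m_k/n$, AM-GM on the first sum gives
\[
b(m_k) \;\ge\; k\bigl(m_k/n\bigr)^{1/k} + b(n).
\]

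Next I would combine this with the upper bound $b(m_k) \le e\ln m_k + \alpha + o(1)$ provided by Theorem~\ref{thm:alpha}. Writing $y_k := (m_k/n)^{1/k}$, so that $e\ln m_k = e\ln n + ek\ln y_k$, the combination rearranges to
\[
b(n) \;\le\; e\ln n + \alpha + o(1) + k\bigl(e\ln y_k - y_k\bigr).
\]
The elementary inequality $x \ge e\ln x$ for all $x > 0$ (with equality iff $x = e$) forces the bracketed term to be nonpositive, leaving $b(n) \le e\ln n + \alpha + o(1)$. Sending $k\to\infty$ makes $m_k\to\infty$ and kills the $o(1)$, so $b(n) \le e\ln n + \alpha$, proving the contrapositive.

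I do not anticipate any genuine obstacle: the proof is really the observation that Theorem~\ref{thm:alpha}'s upper bound on $b(m_k)$ and the AM-GM lower bound on the $k$-step descent $m_k \to n$ meet exactly at the constant $e$, so the $k$-dependent terms cancel in the limit. The only delicate point is the logical step of extracting the inequality for $b(n)$ from one on $b(m_k)$ via optimality of the subpath, together with the observation that $m_k \to \infty$ so that the $o(1)$ error from Theorem~\ref{thm:alpha} can indeed be driven to zero.
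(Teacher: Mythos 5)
Your proposal is correct and is essentially the paper's own argument, merely phrased as a contrapositive rather than a contradiction: both decompose the optimal cost of a large $m_k$ (the paper's $N$) through $n$ via the optimality recursion, apply AM--GM to the descent from $m_k$ to $n$ together with the inequality $kx^{1/k}\geq e\ln x$, and play this off against the upper bound of Theorem~\ref{thm:alpha}, letting $m_k\to\infty$ absorb the $o(1)$. No substantive difference.
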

\begin{proof}
Let $b(n)=e\ln n +\alpha+\varepsilon$ where $\varepsilon>0$.  Then if $n\in \mathcal{A}_\infty$ there would be arbitrarily large $N$ so that $n$ is an element of the jumping pattern of $N$.  Then again using the arithmetic geometric mean it follows that for these $N$
\begin{multline*}
w(N)=w(N=b_0,\ldots,b_k,n,c_0,\ldots,1=c_\ell)=\sum_{i=1}^k{b_{i-1}\over b_{i}}+w(n)\\
\geq e\ln\big({N\over n}\big)+e\ln n+\alpha+\varepsilon=e\ln N +\alpha +\varepsilon.
\end{multline*}
But this contradicts Theorem~\ref{thm:alpha}, so it must be that $n\notin \mathcal{A}_\infty$.
\end{proof}

By definition the elements of the integer sequence introduced in Theorem~\ref{thm:alpha} satisfy $b(d(n))\leq e\ln d(n)+\alpha$, and so can belong to $\mathcal{A}_\infty$.  On the other hand the first integer $n$ that satisfies $b(n)\leq e\ln n+\alpha$ and is {\em not}\/ in the integer sequence is $d(212)+1\approx 1.29131838{\times}10^{92}$.  In particular, we have that the first $212$ elements of $\mathcal{A}_\infty$ come from the integer sequence introduced in Theorem~\ref{thm:alpha}.  This technique though is not sensitive enough to show that $\mathcal{A}_\infty$ is this integer sequence.  The problem seems to be that $ed(n)$ can have fractional part very close to $0.5$ so that we can lose a lot in rounding.  On the other hand for the Pell numbers when we successively multiplied by $1+\sqrt2$ the fractional part went to $0$ and so the ``error'' in rounding rapidly decreased. 

\begin{conjecture}
For the weight function $w(i,j)=j/i$ the set $\mathcal{A}_\infty=\{1,3,8,22,60,\ldots\}$, is composed of the elements in the integer sequence introduced in Theorem~\ref{thm:alpha}.
\end{conjecture}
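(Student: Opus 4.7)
The plan is to imitate the Pell argument and upgrade the Corollary to a full characterization. One inclusion is already accessible: the Corollary gives $\mathcal{A}_\infty\subseteq\{n:b(n)\le e\ln n+\alpha\}$, and every $d(k)$ lies in this set because, by definition of $\alpha$, the greedy tail $d(k),d(k+1),d(k+2),\ldots$ realizes the asymptotic optimum; combined with the monotonicity supplied by Proposition~\ref{prop:seqincreasing} one can then witness $d(k)=a^k(d(K))$ for arbitrarily large $K$ and conclude $d(k)\in\mathcal{A}_\infty$. The real content is the reverse inclusion, that no integer outside $D=\{d(k)\}$ can occur arbitrarily deep in optimal jump patterns.

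For this I would induct on depth, in parallel with the proof of Theorem~\ref{thm:pell}. Assume it is already known that for all sufficiently large $n$, every optimal pattern for $n$ begins with $1=d(0),d(1),\ldots,d(k)$, and that for some $t\ne d(k+1)$ there are arbitrarily large $b_n$ whose optimal pattern has the form $(b_n,\ldots,b_1,t,d(k),\ldots,1)$. Because $w(i,j)=j/i$ factors multiplicatively, the scaling $c_i=b_i/t$ yields the clean identity
\[
w(b_n,\ldots,b_1,t,d(k),\ldots,1)=\sum_{i=1}^{k}{d(i)\over d(i-1)}+{t\over d(k)}+w(c_n,\ldots,c_1,1).
\]
The AM-GM bound $w(c_n,\ldots,c_1,1)\ge e\ln c_n=e\ln(b_n/t)$ from Section~\ref{sec:e}, combined with the upper bound $b(b_n)\le e\ln b_n+\alpha+o(1)$ from Theorem~\ref{thm:alpha} applied to the optimum on the left, reduces after cancellation to
\[
{t\over d(k)}-e\ln\!\left({t\over d(k)}\right)\le o(1).
\]
The function $g(s)=s-e\ln s$ is strictly convex with unique minimum $0$ at $s=e$ and $g(s)\sim\tfrac{1}{2e}(s-e)^2$, so this forces $t/d(k)\to e$. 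Since $d(k+1)$ is the unique integer within $\tfrac12$ of $e\,d(k)$, the conjecture would follow once the inequality is sharpened to $|t-e\,d(k)|<\tfrac12$, which requires the implicit $o(1)$ to actually be $o(1/d(k)^{2})$.

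Making that $o(1)$ effective is the main obstacle, and precisely the gap the authors isolate. In the Pell proof the corresponding error is replaced by a genuine tail $\sum_{i>k}(p_i+p_{i+1})/p_i^2=O(r^{-2k})$ whose geometric decay dwarfs the $\pm1$ perturbation to $p_{k+1}$; here the fluctuations $d(n)-e\,d(n-1)$ range throughout $[-\tfrac12,\tfrac12]$ with no decay, so a single constant $\alpha$ is not a sharp enough reference. The remedy I would attempt is to replace $\alpha$ by the target-dependent quantity
\[
\alpha(n)=\lim_{m\to\infty}\Bigl(\sum_{i=1}^{m}{d_n(i)\over d_n(i-1)}-e\ln d_n(m)\Bigr),\qquad d_n(0)=n,\ d_n(i)=\lfloor e\,d_n(i-1)+\tfrac12\rfloor,
\]
and then prove the quantitative separation $\alpha(t)-\alpha(d(k+1))\gg 1/d(k)^{2}$ for every integer $t\ne d(k+1)$ with $t$ near $e\,d(k)$. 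This reformulates the combinatorial conjecture as a Diophantine statement about how close the fractional parts along the orbit $n,\lfloor en+\tfrac12\rfloor,\lfloor e\lfloor en+\tfrac12\rfloor+\tfrac12\rfloor,\ldots$ can come to $\tfrac12$, and pinning those fractional parts down effectively is, I expect, where the genuine difficulty lies—consistent with the numerical observation that the smallest potential counterexample is already of size $d(212)+1\approx 10^{92}$.
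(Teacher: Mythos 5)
The statement you are trying to prove is stated in the paper as a \emph{conjecture}; the authors give no proof, and they explicitly explain why the Pell-style argument breaks down here. Your proposal, to its credit, arrives at essentially the same diagnosis: after the scaling identity and the AM--GM lower bound, ruling out $t\neq d(k+1)$ requires comparing $g(t/d(k))-g(d(k+1)/d(k))$, where $g(s)=s-e\ln s$, against the tail $\delta_k=\sum_{i>k}\bigl(d(i)/d(i-1)-e\ln(d(i)/d(i-1))\bigr)$ (your $o(1)$ should really be this explicit tail, the analogue of $\sum_{i>k}(p_i+p_{i+1})/p_i^2$ in the Pell proof). When the fractional part of $e\,d(k)$ is close to $\tfrac12$, the values $g(t/d(k))$ for $t=d(k+1)$ and $t=d(k+1)\pm1$ differ by only $O(\eta/d(k)^2)$ where $\eta$ measures the distance of that fractional part from $\tfrac12$, and no effective lower bound on $\eta$ along the orbit is known. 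So the proposal is not a proof but a reduction to an open Diophantine problem, which you candidly acknowledge; that reduction is consistent with, and does not go beyond, the paper's own discussion of why the technique is ``not sensitive enough.''

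There is also a genuine error in the direction you describe as ``already accessible.'' The Corollary in Section~\ref{sec:e} gives only a \emph{necessary} condition for membership in $\mathcal{A}_\infty$: it says $b(n)>e\ln n+\alpha$ implies $n\notin\mathcal{A}_\infty$. It does not say that $b(n)\leq e\ln n+\alpha$ implies $n\in\mathcal{A}_\infty$ --- indeed the paper points out that $d(212)+1$ satisfies this bound while (conjecturally) not belonging to $\mathcal{A}_\infty$. Knowing $b(d(k))\leq e\ln d(k)+\alpha$, together with the monotonicity of Proposition~\ref{prop:seqincreasing}, does not let you ``witness $d(k)=a^k(d(K))$'': the optimal pattern from $d(K)$ need not pass through $d(k)$ at all, and nothing forces $d(k)$ to appear at depth exactly $k$. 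The inclusion $\{d(k)\}\subseteq\mathcal{A}_\infty$ is part of what the conjecture asserts, not a consequence of the stated results, so both inclusions remain open.
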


\section{Jumping to infinity}\label{sec:geometric}
In some cases the set $\mathcal{A}_\infty$ is also the unique increasing sequence which has minimal weight.  In terms of jumping patterns it can be thought of as the jumping pattern to infinity with minimal weight.  An example of this is the Pell numbers for the weight function $w(i,j)=(i+j)/i^2$.  On the other hand, for the function $w(i,j)=j/i$ all such sequences have infinite weight and so there is no ``minimal'' weight sequence.

In this section we will consider constructing optimal jumping patterns to infinity along the real numbers for certain weight functions.  Out main result will be to show that these patterns are geometric.  For a sequence of increasing real numbers $P=(1=a_0,a_1,a_2,a_3,\ldots)$ we let the total weight of $P$ be
\[
w(P)=\sum_{k=0}^\infty w(a_k,a_{k+1}).
\]

\begin{theorem}\label{thm:geometric}
Let $w(i,j)=i^{-\alpha}f(j/i)$ where $\alpha>0$ and with $f(1)>0$, and $f(t)$ non-decreasing, unbounded and continuous for $t\geq1$.  Then there is a $q$ so that for all increasing real sequences $P$,
\[
w(P)\geq {q^\alpha f(q)\over q^\alpha-1}.
\]
Further, equality holds for $P=(1,q,q^2,q^3,\ldots)$.
The value $q$ is one which minimizes $t^\alpha f(t)/(t^\alpha -1)$ for $t\geq 1$.
\end{theorem}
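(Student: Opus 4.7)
The plan has three parts: verify existence of a minimizing $q>1$, confirm that the geometric sequence achieves the claimed weight with equality, and establish the lower bound for all other sequences. For equality, a direct geometric-series calculation gives
\[
w(1,q,q^2,\ldots)=\sum_{k=0}^\infty q^{-k\alpha}f(q)=\frac{f(q)}{1-q^{-\alpha}}=\frac{q^\alpha f(q)}{q^\alpha-1},
\]
so this part is immediate once we know $q$ exists. For existence, set $g(t):=t^\alpha f(t)/(t^\alpha-1)$ on $(1,\infty)$; continuity of $f$ gives continuity of $g$, and $g(t)\to\infty$ both as $t\to 1^+$ (since $f(1)>0$ while $t^\alpha-1\to 0^+$) and as $t\to\infty$ (since $f$ is unbounded while $t^\alpha/(t^\alpha-1)\to 1$). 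Hence $g$ attains a minimum at some $q\in(1,\infty)$, and I take this as the defining $q$.

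The main idea for the lower bound is that the minimizing property of $q$ translates into a pointwise inequality on $f$ that dovetails exactly with the weight structure. Writing $C=q^\alpha f(q)/(q^\alpha-1)$, the bound $g(r)\geq C$ for every $r>1$ rearranges to
\[
f(r)\geq C\bigl(1-r^{-\alpha}\bigr).
\]
Given any increasing sequence $P=(1=a_0,a_1,a_2,\ldots)$, set $r_k:=a_{k+1}/a_k>1$ and multiply the above (with $r=r_k$) by $a_k^{-\alpha}$ to obtain
\[
a_k^{-\alpha}f(r_k)\;\geq\;C\bigl(a_k^{-\alpha}-a_{k+1}^{-\alpha}\bigr).
\]
Summing over $k\geq 0$ and using that the right-hand side telescopes yields
\[
w(P)=\sum_{k=0}^\infty a_k^{-\alpha}f(r_k)\;\geq\;C\Bigl(1-\lim_{k\to\infty}a_{k}^{-\alpha}\Bigr),
\]
which equals $C$ whenever $a_k\to\infty$.

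To close the argument I would dispose of the bounded case separately: if $\{a_k\}$ stays bounded, then $a_k\to L\in[1,\infty)$ and $r_k\to 1$, so by continuity $f(r_k)\to f(1)>0$, the summands $a_k^{-\alpha}f(r_k)\to L^{-\alpha}f(1)>0$ do not tend to zero, and $w(P)=\infty$ trivially dominates $C$. The only real substance of the proof is spotting that the minimizer condition $g(r)\geq C$ is precisely the pointwise inequality that makes $a_k^{-\alpha}(1-r_k^{-\alpha})=a_k^{-\alpha}-a_{k+1}^{-\alpha}$ collapse to a telescoping sum; once that algebraic coincidence is seen, no further work is needed. I do not anticipate any genuine obstacle — the argument is the discrete shadow of the observation that the Euler--Lagrange equation of this variational problem is scale-invariant, so the optimum is self-similar with common ratio $q$.
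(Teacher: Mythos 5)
Your proposal is correct, and the lower-bound half is proved by a genuinely different route than the paper's. The paper exploits the homogeneity of $w$ to get the rescaling identity $w(P)=f(a_1)+a_1^{-\alpha}w(P_1)$, assumes for contradiction that $w(P)<Q$, deduces that the rescaled tails satisfy $w(P_k)<w(P_{k-1})<\cdots<w(P)$, shows $a_k\to\infty$ (same bounded-case observation you make), and then splices a geometric tail $(a_\ell,a_\ell q,a_\ell q^2,\ldots)$ onto a far-out prefix to manufacture a sequence $P'$ with $w(P')<Q$ whose own tail forces $w(P')\geq Q$ --- a contradiction. You instead turn the minimality of $q$ into the pointwise calibration $f(r)\geq C\bigl(1-r^{-\alpha}\bigr)$ for $r\geq 1$, multiply by $a_k^{-\alpha}$ so that the right-hand side becomes the exact difference $C\bigl(a_k^{-\alpha}-a_{k+1}^{-\alpha}\bigr)$, and telescope; the partial sums give $\sum_{k\leq N}a_k^{-\alpha}f(r_k)\geq C\bigl(1-a_{N+1}^{-\alpha}\bigr)$, and the bounded case is disposed of because each summand is then at least $m^{-\alpha}f(1)>0$. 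Your argument is a direct proof rather than a proof by contradiction, is shorter, avoids the splicing construction (whose bookkeeping in the paper is slightly delicate), and as a bonus makes the equality case transparent: equality forces every ratio $r_k$ to be a minimizer of $g$. What the paper's approach buys is thematic consistency --- the same self-similarity decomposition drives Lemma~\ref{lemma:pell} and the proof of Theorem~\ref{thm:pell} --- but as a standalone proof of this theorem your telescoping argument is at least as good.
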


\begin{proof}
Let us first consider a geometric sequence $P=(1,t,t^2,t^3,\ldots)$ for $t>1$.  Then 
\[
w(P)=\sum_{k=0}^\infty{1\over t^{\alpha k}}f(t)=f(t){1\over 1-(1/t^\alpha)}={t^\alpha f(t)\over t^\alpha-1}=g(t).
\]
The function $g(t)$ is positive and continuous for $t>1$; further, $\lim_{t\to1}g(t)=\lim_{t\to\infty}g(t)=\infty$.  In particular, there is some $t$ which minimizes this function which we denote by $q$.  So that for all $t>1$,
\[
{t^\alpha f(t)\over t^\alpha-1}\geq {q^\alpha f(q)\over q^\alpha-1}=Q.
\]
We note that $P=(1,q,q^2,q^3,\ldots)$ has $w(P)=Q$.

The next thing is to show that no sequence has $w(P)<Q$.  To do this we first note that for $P=(1=a_0,a_1,a_2,a_3,\ldots)$ we can rewrite  $w(P)$ as
\begin{eqnarray*}
w(P)=\sum_{k=0}^\infty w(a_k,a_{k+1})&=&\sum_{k=0}^\infty {1\over a_k^\alpha}f\big({a_{k+1}\over a_k}\big)\\
&=&f(a_1)+{1\over a_1^\alpha}\sum_{k=1}^\infty{1\over (a_k/a_1)^\alpha}f\big({a_{k+1}/a_1\over a_k/a_1}\big)=f(a_1)+{1\over a_1^\alpha}w(P_1),
\end{eqnarray*}
where $P_1=(a_1/a_1,a_2/a_1,a_3/a_1,a_4/a_1,\ldots)$.

Now suppose that $w(P_1)\geq w(P)$.  Then the above becomes
\[
w(P)\geq f(a_1)+{1\over a_1^\alpha}w(P)\qquad\mbox{or}\qquad
w(P)\geq {a_1^\alpha f(a_1)\over a_1^\alpha-1}
\geq Q.
\]
It follows for any $P$ with $w(P)< Q$ that $w(P_1)<w(P)$.  Repeating this argument by induction we have for $P_k=(a_k/a_k,a_{k+1}/a_k,a_{k+2}/a_k,a_{k+3}/a_k,\ldots)$ that $w(P_k)<w(P_{k-1})<\cdots<w(P_1)<w(P)$.

Now suppose that $P=(a_0,a_1,a_2,a_3,\ldots)$ satisfies $w(P)<Q$.  We must have that $a_k\to\infty$, otherwise we have $a_k\leq m$ for all $k$ and some $m<\infty$ so that
\[
\sum_{k=0}^\infty {1\over a_k^\alpha}f\big({a_{k+1}\over a_k}\big)
\geq \sum_{k=0}^\infty {1\over m^\alpha}f(1)=\infty.
\]
In particular, there is some $\ell$ so that
\[
{1\over a_\ell^\alpha}Q<Q-w(P).
\]
Now consider the sequence $P'=(a_0,a_1,\ldots,a_\ell,a_\ell q,a_\ell q^2,a_\ell q^3,\ldots)$.  We have that 
\[
w(P')=w(P)-{1\over a_\ell^\alpha}w(P_k) +{1\over a_\ell^\alpha}Q<w(P)+{1\over a_\ell^\alpha}Q<w(P)+Q-w(P)=Q.
\]

Since $w(P')<Q$ we have with the above comments that
\[
Q=w(P_k')<w(P')<Q,
\]
which is impossible.  So our assumption that there is some $P$ with $w(P)<Q$ must be false.  In particular, we have that $w(P)\geq Q$ for all real increasing sequences $P$.
\end{proof}

The weight function $w(i,j)=(i+j)/i^2$ can be rewritten as $w(i,j)=i^{-1}\big(1+(j/i)\big)$.  So applying Theorem~\ref{thm:geometric} with $\alpha =-1$ and $f(t)=1+t$ we have that the optimal jump pattern is $P=(1,r,r^2,r3,\ldots)$ where $r=1+\sqrt{2}$ (which showed up in the Pell numbers); and further the minimal weight is $w(P)=3+2\sqrt2$ which agrees with Lemma~\ref{lemma:pell}.

We cannot apply the results of Theorem~\ref{thm:geometric} directly to $w(i,j)=j/i$.  However, we can consider what happens with the weight function $w(i,j)=j/i^\alpha$ for $\alpha>1$; in this case we have $q=\alpha^{1/(\alpha-1)}$.  If we now let $\alpha\to 1$ then $q\to e$, which we might expect given the construction of the sequence in Theorem~\ref{thm:alpha}.

\section{Conclusion}\label{sec:conclusion}
In this paper we have looked at the construction of cost minimizing jump sequences.  We have seen that when $w(i,j)=(i+j)/i^2$ that the corresponding jump sequence sieves to the Pell numbers.  By choosing different weight functions we can generate different sequences; of course what is interesting are simple weight functions which can generate well known sequences.  Perhaps the most well known sequence is the Fibonacci numbers and these can be generated by a relatively simple weight function.

\begin{theorem}\label{thm:fib}
Let $w(i,j)=(ij+j^2)/i^3$.  Then $\mathcal{A}_\infty=\{1,2,3,5,8,13,\ldots\}$ is the Fibonacci numbers.
\end{theorem}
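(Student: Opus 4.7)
The strategy is a direct adaptation of the proof of Theorem \ref{thm:pell}. The weight factors as $w(i,j)=i^{-1}\bigl((j/i)+(j/i)^2\bigr)$, which fits the form of Theorem \ref{thm:geometric} with $\alpha=1$ and $f(t)=t+t^2$; minimizing $t^2(1+t)/(t-1)$ gives $t^2-t-1=0$, so the optimal geometric ratio is $q=\varphi=(1+\sqrt5)/2$ and the minimum weight-to-infinity is $\varphi^5$. This matches the Fibonacci asymptotics $F_{k+1}/F_k\to\varphi$ and the sharp identity $F_{k+1}-\varphi F_k=\bar\varphi^k$ with $\bar\varphi=-1/\varphi$, which plays the role that \eqref{eq:pellrounding} did in the Pell analysis.

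I would first establish the Fibonacci analog of Lemma \ref{lemma:pell}. With $q_n=\inf_{1\le a_1\le\cdots\le a_n}w(a_n,\ldots,a_1,1)$, the scaling identity $w(a_{k-1},a_k)=a_1^{-1}w(a_{k-1}/a_1,a_k/a_1)$ gives the recurrence $q_n=\inf_{a_1\ge1}\bigl(a_1+a_1^2+q_{n-1}/a_1\bigr)$; at a fixed point the two conditions $2a^3+a^2=L$ and $L=a^2(1+a)/(a-1)$ force $a^2-a-1=0$, so $a=\varphi$ and $L=\varphi^5$. Next I would define $W=\sum_{i=1}^\infty w(F_i,F_{i+1})=\sum_{i=1}^\infty F_{i+1}F_{i+2}/F_i^3<\infty$, which upper bounds $b(n)$ via jumping along Fibonacci numbers. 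Then I would prove the analog of Lemma \ref{lem:bound}, bounding $\sum_{i\ge 2j+1}w(F_i,F_{i+1})=O(\varphi^{-2j})$ by pairing consecutive terms to cancel the $\bar\varphi^i$ oscillations, exactly as in the Pell proof.

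The induction on $k$ then proceeds identically in form. Assume every optimal jumping pattern eventually begins $1=F_1,\ldots,F_k$ and suppose some $t$ appears infinitely often at the next position. From $W\ge w\bigl(\jump(b_n)\bigr)$, the scaling identity, and $q_n\to\varphi^5$, subtracting the common prefix and $w(F_k,F_{k+1})$ yields
\[
\sum_{i=k+1}^\infty\frac{F_iF_{i+1}+F_{i+1}^2}{F_i^3}\ge \frac{(t-F_{k+1})(t+F_{k+2})}{F_k^3}+\frac{\varphi^5}{t},
\]
using $F_k+F_{k+1}=F_{k+2}$. Calling the right side $f(t)$, one computes $f'(t)=(2t+F_k)/F_k^3-\varphi^5/t^2$, which vanishes at a unique $t_0>0$ with $t_0=\varphi F_k+O(\varphi^{-k})$. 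Since $|F_{k+1}-\varphi F_k|=\varphi^{-k}<1/2$ for $k\ge 2$, $f$ is decreasing on $[1,F_{k+1}-1]$ and increasing on $[F_{k+1}+1,\infty)$, so one needs only rule out $t=F_{k+1}\pm1$ (together with a short direct check for $k=1$).

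The main obstacle is the explicit verification of the four resulting inequalities (parity of $k$ times $t=F_{k+1}\pm1$): after substituting the closed form $F_n=(\varphi^n-\bar\varphi^n)/\sqrt5$ and applying the tail bound, each case reduces to a polynomial inequality in $\varphi^k$ whose highest-order term carries a positive coefficient dominating for all $k$ beyond a small threshold, with the remaining small cases checked by hand. The bookkeeping is expected to be more intricate than in the Pell proof because the Fibonacci convergence constant $\varphi^{-1}\approx 0.618$ is larger than the Pell constant $r^{-1}\approx 0.414$, so both the tail and the error $F_{k+1}-\varphi F_k$ decay more slowly; as a consequence one may need to pull off two initial terms of the tail (rather than one, as in the odd-$k$ cases of Theorem \ref{thm:pell}) before applying the tail lemma, producing polynomials of somewhat higher degree but no new conceptual difficulty.
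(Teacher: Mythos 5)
Your proposal is correct and is exactly what the paper intends: the paper omits this proof, stating only that it is ``done in the exact same manner as the proof of Theorem~\ref{thm:pell},'' and your sketch carries out that adaptation with the right constants (optimal ratio $q=\varphi$, limiting cost $\varphi^5$, the factorization $(t-F_{k+1})(t+F_{k+2})/F_k^3$, and the reduction to $t=F_{k+1}\pm1$ via the rounding identity $F_{k+1}-\varphi F_k=\bar\varphi^{\,k}$). Your caveat about the slower decay $\varphi^{-1}>r^{-1}$ possibly requiring more terms to be pulled off before applying the tail lemma is a fair and accurate assessment of where the extra bookkeeping lies.
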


The proof of Theorem~\ref{thm:fib} is done in the exact same manner as the proof of Theorem~\ref{thm:pell} and we will omit the proof.  Applying the function $w(i,j)=(ij+j^2)/i^3$ to Theorem~\ref{thm:geometric} we get $q=\phi=(1+\sqrt5)/2$; while the Fibonacci numbers can be defined by letting $f_1=1$ and $f_n=\lfloor \phi f_{n-1}+0.5\rfloor$ for $n\geq 2$.

It might appear that finding $\mathcal{A}_\infty$ reduces to finding the optimal real ratio and then ``round; multiply; repeat''.  However this will not always work.  For instance if we let $w(i,j)=(i^2+ij+j^2)/i^3$ then the optimal real ratio is the real root of $2q^3-2q^2-2q-1=0$ which has $q\approx {\tt 1.73990787\ldots}$.  The ``round; multiply; repeat'' would predict the set $\{1,2,3,5,9,16,28,49,\ldots\}$ but instead we have $\mathcal{A}_\infty=\{1, 2, 4, 7, 12, 21, 37, 64,\ldots \}$.  The problem is that instead of $3$ (as we would predict by rounding $3.47981574\ldots$) we got $4$ which then throws off the rest of the sequence.  The problem of finding the set $\mathcal{A}_\infty$ in general remains elusive.  However, it seems reasonable that the technique we have given here could work for  cases when the ratio $q$ is a Pisot-Vijayaraghavan number,  which has the property that $q^n$ is an almost integer while the power of any other root goes to $0$.

We have limited ourselves to homogeneous functions (i.e., functions which can be expressed in the form $w(i,j)=i^{-\alpha}f(j/i)$).  These functions have the benefit that when dealing with real sequences that the tails of the sequences also form cost minimizing sequences, which was used in the proof of Theorem~\ref{thm:pell}.  What can be said about functions which are not homogeneous?  (In \cite{paper} the authors considered the function $w(i,j)=(1-q^j)/i$ where $0<q<1$ is fixed and jump sequences were over real numbers; in that case they showed that the number of jumps in every jump pattern is bounded by a function of $q$.) 

There are also higher dimensional versions of this problem, i.e., jumping in $\mathbb{N}\times\mathbb{N}$ with a weight function between points $w\big((i_1,j_1),(i_2,j_2)\big)$, for various weight functions what can be said about the patterns of the jumps?  

There remain many interesting problems regarding these jumping sequences.

\end{document}